\title[Upper bound for determinant Waring rank]{An improved upper bound for the Waring rank of the determinant}
\date{\today}
\author{Garritt Johns}
\email{garrittjohns@u.boisestate.edu}
\address{Department of Mathematics \\
Boise State University \\
1910 University Drive \\
Boise, ID \ 83725-1555 \\
USA}
\author{Zach Teitler}
\email{zteitler@boisestate.edu}
\address{Department of Mathematics \\
Boise State University \\
1910 University Drive \\
Boise, ID \ 83725-1555 \\
USA}
\newtheorem{theorem}{Theorem}
\newtheorem{lemma}[theorem]{Lemma}
\theoremstyle{definition}
\newtheorem{definition}[theorem]{Definition}
\theoremstyle{remark}
\newtheorem{remark}[theorem]{Remark}
\newcommand{\bbC}{\mathbb{C}}
\newcommand{\bbK}{\mathbb{K}}
\newcommand{\bbP}{\mathbb{P}}
\newcommand{\bbZ}{\mathbb{Z}}
\newcommand{\field}{\Bbbk}
\newcommand{\dett}{\det\nolimits}
\DeclareMathOperator{\rank}{rank}
\DeclareMathOperator{\Sym}{Sym}
\DeclareMathOperator{\Supp}{Supp}
\DeclareMathOperator{\diag}{diag}
\DeclareMathOperator{\Aff}{Aff}
\newcommand{\defining}[1]{\textbf{#1}}
\subjclass[2020]{14N07}
\keywords{Waring rank, symmetric rank, determinant}
\begin{document}

\begin{abstract}
The Waring rank of the generic $d \times d$ determinant is bounded above by $d \cdot d!$.
This improves previous upper bounds, which were of the form an exponential times the factorial.
Our upper bound comes from an explicit power sum decomposition.
We describe some of the symmetries of the decomposition and set-theoretic defining equations for the terms of the decomposition.
\end{abstract}

\maketitle

For a homogeneous polynomial $F$ of degree $d$,
the \defining{Waring rank} of $F$, denoted $\rank(F)$, is the least integer $r$
such that $F$ can be expressed as a linear combination of $r$ terms
which are each $d$th powers of linear forms.
For example,
\[
  xy = \frac{1}{4} \Big( (x+y)^2 - (x-y)^2 \Big),
\]
so $\rank(xy) \leq 2$.
Similarly
\[
  xyz = \frac{1}{24} \Big( (x+y+z)^3 - (x+y-z)^3 - (x-y+z)^3 + (x-y-z)^3 \Big),
\]
so $\rank(xyz) \leq 4$.

Let $\dett_d$ denote the generic $d \times d$ determinant,
that is, the determinant of the $d \times d$ matrix $(x_{i,j})_{1 \leq i,j \leq d}$
whose entries are independent variables.
Previous upper bounds for $\rank(\dett_d)$ were $2^{d-1} \cdot d!$,
later improved to $\left(\frac{5}{6}\right)^{\lfloor d/3 \rfloor} 2^{d-1} \cdot d!$;
these have the form, an \emph{exponential} function times a factorial.
We give an explicit expression to show a new upper bound for the Waring rank of the determinant,
which is a \emph{linear} function times the factorial, namely,
\begin{equation}
\label{equation: rank bound}
  \rank(\dett_d) \leq d \cdot d! .
\end{equation}
This holds over the complex numbers,
or more generally over any field or commutative ring where $d!$ is invertible and there is a primitive $d$th root of unity.%
\footnote{J.M.~Landsberg informed us of
an unpublished result of Gurvits giving an
upper bound of $(d+1) \cdot d!$, see below.}

Specifically, we show that
\begin{equation}
\label{equation: explicit power sum decomposition}
  d \cdot d! \dett_d
    = \sum_{\sigma \in S_d} (-1)^{\sigma} \sum_{j=1}^d
      (-1)^{(d+1)j} \left( \sum_{i=1}^d \omega^{ij} x_{i,\sigma i} \right)^d,
\end{equation}
where $\omega$ is a primitive $d$th root of unity and $S_d$ denotes the symmetric group on $d$ letters.
This is a linear combination of $d \cdot d!$ terms which are $d$th powers of linear forms,
with coefficients $\pm 1$.

In addition, we describe some of the symmetries of the decomposition, and set-theoretic defining equations for the terms of the decomposition.

\section{Background}

Fix a degree $d$.
Let $\field$ be a field or commutative ring in which $d!$ is invertible;
in particular, the characteristic is zero or greater than $d$.
We consider homogeneous polynomials $F \in \field[x_1,\dotsc,x_n]$ of degree $d$.

We denote by $\rank_{\field}(F)$, or simply $\rank(F)$, the least number $r$ of linear forms $\ell_1,\dotsc,\ell_r \in \field[x_1,\dotsc,x_n]$ such that $F = c_1 \ell_1^d + \dotsb + c_r \ell_r^d$ for some $c_1,\dotsc,c_r \in \field$.
(The term ``Waring rank'' is often reserved for the case that $\field$ is a field, even an algebraically closed field.)
If $\field \subseteq \bbK$ is an extension field or ring,
then evidently $\rank_{\field}(F) \geq \rank_{\bbK}(F)$.
Thus, upper bounds for $\rank_{\field}(F)$ are also upper bounds for $\rank_{\bbK}(F)$.
For this reason we will describe our upper bound for $\rank(\dett_d)$ over the smallest $\field$ possible.
First, in this section, we describe previously known upper and lower bounds.
All the results described in this section are valid at least for $\field=\bbC$; we make some partial indications of more general $\field$ where they hold.

We have
\begin{equation}
\label{eq: power sum decomposition of product}
  2^{d-1} \cdot d! \; x_1 \dotsm x_d
    = \sum_{\substack{\epsilon \in \{\pm 1\}^d \\ \epsilon_1 = +1}}
      \left( \prod_{i=1}^d \epsilon_i \right) (\epsilon_1 x_1 + \dotsb + \epsilon_d x_d)^d,
\end{equation}
which shows that $\rank(x_1 \dotsm x_d) \leq 2^{d-1} < \infty$
whenever $2^{d-1} \cdot d!$ is a unit in $\field$.
By substitution, any monomial of degree $d$ has finite rank, and then so does any homogeneous form,
simply by decomposing each monomial into a sum of powers.
This shows that every homogeneous form of degree $d$ has finite rank.

One can show that in fact $\rank(xy) = 2$ and $\rank(xyz) = 4$, and more generally
\[
  \rank(x_1 \dotsm x_d) = 2^{d-1},
\]
see \cite{MR2842085}.

The determinant $\dett_d$ is a (signed) sum of $d!$ monomials, each of which is of the form $x_1 \dotsm x_d$.
For example, $\dett_3 = x_{1,1} x_{2,2} x_{3,3} - \dotsb$, a sum of $6$ terms which each have the form $xyz$.
Therefore $\dett_3$ can be written as a sum of $6 \cdot \rank(xyz) = 24$ powers of linear forms.

In general,
\[
  \dett_d = \sum_{\sigma \in S_d} (-1)^{\sigma} x_{1,\sigma 1} \dotsm x_{d,\sigma d}.
\]
Combining with \eqref{eq: power sum decomposition of product}, this yields the ``classical'' power sum decomposition
\begin{equation}\label{equation: classical decomposition}
  2^{d-1} \cdot d! \dett_d = \sum_{\sigma \in S_d} (-1)^{\sigma}
    \sum_{\substack{\epsilon \in \{\pm1\}^d \\ \epsilon_1 = +1}} \left( \prod_{i=1}^d \epsilon_i \right)
      (\epsilon_1 x_{1,\sigma 1} + \dotsb + \epsilon_d x_{d,\sigma d})^d,
\end{equation}
with $2^{d-1} \cdot d!$ terms.
That is,
\begin{equation}\label{equation: classical upper bound}
    \rank(\dett_d) \leq 2^{d-1} \cdot d! .
\end{equation}
So $\rank(\dett_d)$ for $d = 3,4,5,6,\dotsc$ are bounded above by $24$, $192$, $1920$, $23040$, and so on.

Derksen \cite{MR3494510} and, later, Krishna-Makam \cite{Krishna:2018aa} found expressions for $\dett_3$ as a sum of $5$ terms
which are products of linear forms.
The identity of Krishna-Makam, which is slightly simpler, is as follows:
\[ 
\begin{split}
  \det\nolimits_3 &= x_{1,1} \, (x_{2,2}+x_{2,3}) \, (x_{3,1}+x_{3,3}) \\
    & \quad + (x_{1,2} + x_{1,3}) \, x_{2,1} \, x_{3,2} \\
    & \quad - (x_{1,1} + x_{1,3}) \, x_{2,2} \, x_{3,1} \\
    & \quad - x_{1,2} \, (x_{2,1} + x_{2,3}) \, (x_{3,2} + x_{3,3}) \\
    & \quad + (x_{1,2} - x_{1,1}) \, x_{2,3} \, (x_{3,1} + x_{3,2} + x_{3,3}).
\end{split}
\]
Notably, this holds over the integers (and even in characteristic $2$).
The Derksen and Krishna-Makam identities give $\rank(\dett_3) \leq 5 \rank(xyz) = 20$.
Derksen observed that by Laplace expansion,
\begin{equation}
\label{equation: derksen upper bound}
  \rank(\dett_d) \leq \left( \frac{5}{6} \right)^{\lfloor d/3 \rfloor} 2^{d-1} \cdot d! .
\end{equation}
In particular $\rank(\dett_d)$ for $d=3,4,5,6,\dotsc$ are bounded above by $20$, $160$, $1600$, $16000$, and so on.

Conner-Gesmundo-Landsberg-Ventura gave an explicit expression for $\dett_3$
as a sum of $18$ cubes of linear forms over $\bbC$ \cite[Theorem 2.11]{Conner:2019aa},
showing
\begin{equation}
    \label{equation: CGLV bound}
    \rank_{\bbC}(\dett_3) \leq 18 .
\end{equation}
Our expression \eqref{equation: explicit power sum decomposition} is a direct generalization of theirs;
their expression is exactly the case $d=3$ of ours.

J.M.~Landsberg informed us of an unpublished result of Gurvits,
that $\rank_{\field}(\dett_d) \leq (d+1) \cdot d!$.
This follows from the identity
\begin{equation}
\label{equation: gurvits decomposition}
  d! \cdot \dett_d = \sum_{\sigma \in S_d} (-1)^{\sigma}
    \left( \left(\sum_{i=1}^d x_{i,\sigma i}\right)^d
      - \sum_{j=1}^d \left( \sum_{\substack{1 \leq i \leq d \\ i \neq j}} x_{i,\sigma i} \right)^d \right).
\end{equation}
This decomposition is valid over the integers, so the upper bound holds
whenever $d!$ is a unit in $\field$.

Lower bounds for $\rank(\dett_d)$ have been studied by several authors, including
\cite{MR2628829}, \cite{MR3316987}, \cite{MR3390032}, \cite{MR3427655}, \cite{Boij:aa}.
Currently, the best lower bounds, for algebraically closed fields $\field$, are as follows.
For all $d \geq 3$, $\rank(\dett_d) \geq \binom{2d}{d} - \binom{2d-2}{d-1}$:
thus for $d=3,4,5,6,\dotsc$, $\rank(\dett_d)$ is bounded below by $14$, $50$, $182$, $672$, and so on, see \cite{MR3390032}.
Recently, \cite{Boij:aa} improved this to $\rank(\dett_3) \geq 15$.
Very soon after that, \cite{Conner:2019ab} improved it further to $\rank_{\bbC}(\dett_3) \geq 17$.

We summarize the previous and new bounds in Table~\ref{table: bounds}.

\begin{table}
\[
\begin{array}{r rrrr rrrr} 
\toprule
d & 2 & 3 & 4 & 5 & 6 & 7 & 8 & 9 \\ 
\midrule
\text{classical upper bound \eqref{equation: classical upper bound}}
  & 4 & 24 & 192 & 1920 & 23040 & 322560 & 5160960 & 92897280 \\ 
\text{Derksen upper bound \eqref{equation: derksen upper bound}}
  & 4 & 20 & 160 & 1600 & 16000 & 224000 & 3584000 & 53760000 \\ 
\text{Gurvits upper bound \eqref{equation: gurvits decomposition}}
  & 6 & 24 & 120 & 720 & 5040 & 40320 & 362880 & 3628800 \\
\text{CGLV upper bound \eqref{equation: CGLV bound}}
  & & 18 & & & & & & \\ 
\text{new upper bound \eqref{equation: rank bound}}
  & 4 & 18 &  96 &  600 &  4320 &  35280 &  322560 &  3265920 \\ 
\midrule
\text{lower bound}          & 4 & 17 &  50 &  182 &   672 &   2508 &    9438 &    35750 \\ 
\bottomrule
\end{array}
\]
\caption{Summary of bounds for $\rank_{\bbC}(\dett_d)$.}
\label{table: bounds}
\end{table}

\subsection{Notation}

We use multi-index notation as follows.
For a tuple $\alpha = (\alpha_1,\dotsc,\alpha_n)$,
$x^\alpha$ denotes $x_1^{\alpha_1} \dotsm x_n^{\alpha_n}$.
We write $|\alpha| = \sum \alpha_i$, $\alpha! = \prod \alpha_i!$,
and $\binom{d}{\alpha} = \frac{d!}{\alpha!}$ when $d=|\alpha|$.

For a pure (coefficient $1$) monomial $M$ and polynomial $P$, $[M]P$ denotes the coefficient of $M$ in $P$.
For example, $[xy](x+y)^2 = 2$.
More generally, $[x^\alpha](x_1+\dotsb+x_n)^{|\alpha|} = \binom{|\alpha|}{\alpha}$.

$S_d$ denotes the symmetric group on $[d]=\{1,\dotsc,d\}$.
For $\sigma \in S_d$ and $i \in [d]$ we write $\sigma i$ for the result of the permutation $\sigma$ applied to $i$.
We write $(-1)^\sigma$ for the sign of the permutation $\sigma$.

\section{Proofs}

\begin{theorem}\label{thrm: decomposition of det into powers of linear forms}
Let $\field$ be any commutative ring containing a primitive $d$th root of unity $\omega$.
Then over $\field$, \eqref{equation: explicit power sum decomposition} holds,
\[
  d \cdot d! \dett_d
    = \sum_{\sigma \in S_d} (-1)^{\sigma} \sum_{j=1}^d
      (-1)^{(d+1)j} \left( \sum_{i=1}^d \omega^{ij} x_{i,\sigma i} \right)^d.
\]
In particular, if also $d!$ is invertible in $\field$, then $\rank(\dett_d) \leq d \cdot d!$.
\end{theorem}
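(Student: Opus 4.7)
The plan is to expand the inner $d$-th powers via the multinomial theorem, collapse the sum over $j$ using a root-of-unity identity, and show that after summing over $\sigma \in S_d$ only the multi-index $\alpha = (1,\dotsc,1)$ contributes, recovering $d\cdot d! \dett_d$.

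First I would apply the multinomial theorem to write
\[
\left(\sum_{i=1}^d \omega^{ij} x_{i,\sigma i}\right)^d = \sum_{|\alpha|=d} \binom{d}{\alpha}\, \omega^{j \langle \alpha \rangle} \prod_{i=1}^d x_{i,\sigma i}^{\alpha_i},
\]
where $\langle\alpha\rangle := \sum_i i\alpha_i$, and then evaluate the inner sum over $j$ as the geometric sum $\sum_{j=1}^d \bigl((-1)^{d+1}\omega^{\langle\alpha\rangle}\bigr)^j$. Since $d(d+1)$ is even, $\bigl((-1)^{d+1}\bigr)^d = 1$, so the base of this geometric series is a $d$-th root of unity, and the sum equals $d$ when the base is $1$ and $0$ otherwise. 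A direct check at $\alpha = (1,\dotsc,1)$, where $\langle\alpha\rangle = d(d+1)/2$, shows the survival condition is equivalent to $\langle\alpha\rangle \equiv d(d+1)/2 \pmod d$.

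Next I would rewrite the right-hand side as $d \sum_\alpha \binom{d}{\alpha}\, P_\alpha$, summed over surviving $\alpha$, where $P_\alpha := \sum_{\sigma \in S_d} (-1)^\sigma \prod_i x_{i,\sigma i}^{\alpha_i}$. The key cancellation is: if two entries $\alpha_{i_1}=\alpha_{i_2}=0$, then the monomial $\prod_i x_{i,\sigma i}^{\alpha_i}$ is invariant under $\sigma \mapsto \sigma \circ (i_1\,i_2)$, which flips the sign, so $P_\alpha = 0$. Thus only surviving $\alpha$ with fewer than two zeros remain. With no zero entry, $|\alpha|=d$ forces $\alpha = (1,\dotsc,1)$, contributing $d \cdot d! \dett_d$ via $P_{(1,\dotsc,1)} = \dett_d$ and $\binom{d}{1,\dotsc,1} = d!$. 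With exactly one zero $\alpha_{i_0}=0$, the constraint $|\alpha|=d$ forces a unique $j \neq i_0$ with $\alpha_j = 2$ and $\alpha_i = 1$ otherwise, so $\langle\alpha\rangle = d(d+1)/2 + (j - i_0)$; since $j - i_0$ is a nonzero residue mod $d$, this $\alpha$ fails the survival condition.

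The only substantial step is this last mod-$d$ arithmetic; it is where the design of the weights $\omega^{ij}$ — with $i$ the row index of the permutation, not merely a summation variable — does the real work. The rest is routine expansion together with the swap-cancellation argument.
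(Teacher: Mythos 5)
Your proposal is correct and follows essentially the same route as the paper's proof: multinomial expansion, the roots-of-unity filter isolating $\sum_i i\alpha_i \equiv \binom{d+1}{2} \pmod{d}$, a sign-reversing cancellation when a row index is repeated (your involution $\sigma \mapsto \sigma\circ(i_1\,i_2)$ is the paper's observation that the relevant coset of permutations has equal numbers of even and odd elements), and the same mod-$d$ arithmetic ruling out the profile with one entry equal to $2$. The only difference is organizational --- you group terms by the exponent vector $\alpha$ while the paper fixes a monomial $x_{I,J}$ in all $d^2$ variables --- and this does not change the substance of the argument.
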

We begin by proving the following lemma.

\begin{lemma}\label{lem: expanding via multinomial theorem}
Fix a degree $d$ monomial $x_{i_1} \dotsm x_{i_d}$,
where the indices $1 \leq i_1 \leq \dotsb \leq i_d \leq d$ may repeat.
Let $\lambda = (\lambda_1,\dotsc,\lambda_d)$ be the $d$-tuple of multiplicities
of elements in the multiset $I = \{i_1,\dotsc,i_d\}$.
That is, each $\lambda_k$ is the number of $j$ such that $i_j = k$.
Then
\begin{equation}
[x_{i_1} \dotsm x_{i_d}]\sum_{j=1}^{d} (-1)^{(d+1)j} \left(\sum_{i=1}^{d}\omega^{ij}x_i\right)^d = 
\begin{cases}
\binom{d}{\lambda} d & \text{if } \sum_{k=1}^d i_k \equiv \binom{d+1}{2} \pmod{d} , \\
0 & \text{otherwise.}
\end{cases}
\end{equation}
\end{lemma}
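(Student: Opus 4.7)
The plan is to expand each inner $d$th power by the multinomial theorem, extract the coefficient of $x_{i_1}\dotsm x_{i_d}$ from each, and then evaluate the remaining sum over $j$ as a geometric series in a root of unity.

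First I would write, by the multinomial theorem,
\[
  \left(\sum_{i=1}^d \omega^{ij} x_i\right)^d
    = \sum_{|\alpha|=d} \binom{d}{\alpha} \omega^{j \sum_i i \alpha_i} x^\alpha .
\]
With $\lambda$ as in the lemma, the monomial $x_{i_1}\dotsm x_{i_d}$ is exactly $x^\lambda$, and $\sum_i i\lambda_i = \sum_k i_k =: s$. So the coefficient of $x_{i_1}\dotsm x_{i_d}$ in the $j$th inner summand is $\binom{d}{\lambda}\omega^{js}$. Pulling this $j$-independent factor $\binom{d}{\lambda}$ out of the outer sum reduces the lemma to the exponential-sum identity
\[
  \sum_{j=1}^d (-1)^{(d+1)j} \omega^{js}
    = \begin{cases} d & \text{if } s \equiv \binom{d+1}{2} \pmod d, \\ 0 & \text{otherwise.} \end{cases}
\]

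To handle this sum I would split on the parity of $d$. When $d$ is odd, $(-1)^{d+1}=1$ and the sum becomes $\sum_{j=1}^d (\omega^s)^j$, a geometric sum in the $d$th root of unity $\omega^s$; it equals $d$ if $\omega^s = 1$ and $0$ otherwise. When $d$ is even, I would rewrite $-1 = \omega^{d/2}$, giving $\sum_{j=1}^d \omega^{j(s+d/2)}$, which equals $d$ if $s \equiv -d/2 \equiv d/2 \pmod d$ and $0$ otherwise. The bookkeeping is then that $\binom{d+1}{2} = d(d+1)/2$ reduces mod $d$ to $0$ when $d$ is odd and to $d/2$ when $d$ is even, so both cases are packaged by the single congruence condition in the lemma.

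The only real subtlety is that the vanishing $\sum_{j=1}^d \zeta^j = 0$ for a nontrivial $d$th root of unity $\zeta = \omega^k$ over a general commutative ring needs justification; I would derive it from the identity $(\zeta-1)\sum_{j=1}^d \zeta^j = \zeta^{d+1}-\zeta = 0$ together with the primitivity of $\omega$, which guarantees that $\omega^k - 1$ is a non-zero-divisor for $1 \le k < d$. Once this ring-theoretic point is noted, the rest of the argument is just the multinomial expansion and the two-case evaluation described above, and the main content is simply matching the exponential sum to the stated congruence on $s$.
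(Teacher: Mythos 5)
Your proposal is correct and follows essentially the same route as the paper's proof: multinomial expansion to isolate the factor $\binom{d}{\lambda}\omega^{js}$, followed by evaluation of the resulting geometric sum of roots of unity, with the sign $(-1)^{(d+1)j}$ absorbed via the congruence $\binom{d+1}{2}\equiv 0$ or $d/2 \pmod d$ (the paper does this in one stroke by writing $(-1)^{d+1}=\omega^{-\binom{d+1}{2}}$ rather than splitting on parity, but that is only a presentational difference). Your extra care in justifying $\sum_{j=1}^d\omega^{pj}=0$ over a general commutative ring via the non-zero-divisor property of $\omega^k-1$ is a welcome addition that the paper leaves implicit.
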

\begin{proof}
We write $x_I = x_{i_1} x_{i_2} \dotsm x_{i_d} = x_1^{\lambda_1} x_2^{\lambda_2} \dotsm x_d^{\lambda_d}$.
Let
\begin{equation*}
P=\sum_{j=1}^{d} (-1)^{(d+1)j} \left(\sum_{i=1}^{d}\omega^{ij}x_i\right)^d .
\end{equation*}
We aim to find the coefficient of $x_I$ in $P$:
\begin{equation*}
[x_I]P = \sum_{j=1}^d (-1)^{(d+1)j} \cdot [x_I]\left(\sum_{i=1}^{d}\omega^{ij}x_i\right)^d.
\end{equation*}
After applying the multinomial theorem, this becomes
\begin{equation*}
[x_I]P = \sum_{j=1}^{d} (-1)^{(d+1)j} \cdot \binom{d}{\lambda} \prod_{k=1}^{d} \omega^{i_k j} 
  = \binom{d}{\lambda} \sum_{j=1}^d \left( (-1)^{d+1} \omega^{\sum i_k} \right)^j .
\end{equation*}
Now $-1 = \omega^{d/2}$, so $(-1)^{d+1} = (-1)^{-(d+1)} = \omega^{-\binom{d+1}{2}}$.
Thus
\begin{equation*}
[x_I]P = \binom{d}{\lambda} \sum_{j=1}^d \left( \omega^{\sum i_k - \binom{d+1}{2}} \right)^j .
\end{equation*}
The claim is proved when we
recall that $\sum_{j=1}^d \omega^{pj}$ is $d$ if $p \equiv 0 \pmod{d}$, $0$ otherwise.
\end{proof}

Note that if $d$ is even, then $\binom{d+1}{2} \equiv d/2 \pmod{d}$, while if $d$ is odd, then $\binom{d+1}{2} \equiv 0 \pmod{d}$.
Either way, $\binom{d+1}{2} \equiv - \binom{d+1}{2} \pmod{d}$.

Now we proceed with the proof of Theorem \ref{thrm: decomposition of det into powers of linear forms}.

\begin{proof}[Proof of Theorem \ref{thrm: decomposition of det into powers of linear forms}]

Let 
\begin{equation*}
R=\sum_{\sigma\in S_d}(-1)^{\sigma}\sum_{j=1}^{d} (-1)^{(d+1)j} \left(\sum_{i=1}^{d}\omega^{ij}x_{i, \sigma i}\right)^d.
\end{equation*}
The claim is that $R = d \cdot d! \dett_d$.

Let $I = (i_1,i_2,\dots,i_d)$ and $J = (j_1,j_2,\dots,j_d)$ be $d$-tuples of elements in $ [d] $.
We denote by $x_{I,J}$ the monomial $x_{I,J} = x_{i_1,j_1} \dotsm x_{i_d,j_d}$.

The coefficient of $x_{I,J}$ in $\dett_d$ is easy to find.
Denote by $\Supp(I)$ the support of $I$, that is, the subset of $[d]$ of values that appear in $I$;
and similarly $\Supp(J)$.
If $\Supp(I) = \Supp(J) = [d]$, then there is a unique permutation $\sigma_{I,J} \in S_d$
such that $\sigma_{I,J} i_k = j_k$ for $k=1,\dotsc,d$.
In this case $[x_{I,J}] \dett_d = (-1)^{\sigma_{I,J}}$.
Otherwise, if $\Supp(I) \neq [d]$ or $\Supp(J) \neq [d]$, then $[x_{I,J}] \dett_d = 0$.

Now we consider the coefficient of $x_{I,J}$ in $R$.
Let $H = \{\sigma\in S_d \mid \sigma i_1 = j_1, \sigma i_2 = j_2, \dots, \sigma i_d = j_d\}$.
We have
\[
  [x_{I,J}] R = \sum_{\sigma \in H} (-1)^\sigma [x_{I,J}] \sum_{j=1}^d (-1)^{(d+1)j} \left( \sum_{i=1}^d \omega^{ij} x_{i,\sigma i} \right)^d ,
\]
since $x_{I,J}$ simply involves variables that do not appear in the $\sigma$ terms for $\sigma \notin H$.
In particular, if $H = \varnothing$ then $[x_{I,J}]R = 0$.
(We will not use this fact, but the case $H = \varnothing$ occurs
when there are $k,k'$ such that $i_k = i_{k'}$ but $j_k \neq j_{k'}$,
or $j_k = j_{k'}$ but $i_k \neq i_{k'}$.)

Now suppose $H \neq \varnothing$.
For $\sigma \in H$ we have $x_{I,J} = x_{i_1, \sigma i_1} \dotsm x_{i_d, \sigma i_d} = x_{1,\sigma 1}^{\lambda_1} \dotsm x_{d,\sigma d}^{\lambda d}$.
By Lemma~\ref{lem: expanding via multinomial theorem}, applied to the variables $x_k = x_{k,\sigma k}$,
we have
\[
  [x_{I,J}] \sum_{j=1}^d (-1)^{(d+1)j} \left( \sum_{i=1}^d \omega^{ij} x_{i,\sigma i} \right)^d =
  \begin{cases}
    \binom{d}{\lambda} d, & \text{if } \sum_{k=1}^d i_k \equiv \binom{d+1}{2} \pmod{d}, \\
    0, & \text{otherwise} .
  \end{cases}
\]
Therefore
\[
  [x_{I,J}]R = \begin{cases}
    \binom{d}{\lambda} d \sum_{\sigma \in H} (-1)^\sigma, & \text{if } \sum_{k=1}^d i_k \equiv \binom{d+1}{2} \pmod{d}, \\
    0, & \text{otherwise} .
  \end{cases}
\]

Let $ \sigma_0 \in H $ and let $ H_0 = \sigma_0^{-1} H $.
Observe $ H_0 $ is precisely the subgroup of $S_d$ consisting of elements that fix $\Supp(I)$ pointwise,
so $H_0$ is (isomorphic to) the symmetric group on $[d] \setminus \Supp(I)$.
Indeed, $\sigma \in H_0$ if and only if $\sigma_0 \sigma \in H$,
if and only if $\sigma_0 \sigma i_k = j_k = \sigma_0 i_k$ for all $k$,
if and only if $\sigma i_k = i_k$ for all $k$.
That is, $H_0 = \{\sigma \in S_d \mid \sigma i_1 = i_1, \dots, \sigma i_d = i_d \}$.



Now if $ |H| \geq 2 $ then
the subgroup $H_0$ consists of an equal number of even and odd permutations,
and so does its coset $H$.
Therefore $\sum_{\sigma H} (-1)^\sigma = 0$, so $[x_{I,J}]R = 0$.

If $|H| = 1$ then
$\Supp(I)$ consists of either $d-1$ or $d$ elements of $[d]$.

Suppose $|\Supp(I)| = d-1$, so (as multisets)
$\{i_1,\dotsc,i_d\} = ([d] \setminus \{m\}) \cup \{n\}$ for some $m,n \in [d]$, $m \neq n$.
In this case
\begin{equation*}
     \sum_{k=1}^{d} i_k = (n - m) + \sum_{k=1}^{d} k = (n - m) + \binom{d+1}{2}.
\end{equation*}
Now $|n - m| < d$, hence $d\nmid n-m$ and so
$\sum_{k=1}^{d} i_k \not \equiv \binom{d+1}{2} \pmod{d}$.
Thus $[x_{I,J}]R = 0$.

Finally, we suppose $\Supp(I) = [d]$.
Note that when $\{i_1,\dots,i_k\} = [d] $ then $|H| = 1$ if and only if $\{j_1,\dots,j_d\} = [d]$ as well.
The unique permutation $\sigma_0 \in H$ must be $\sigma_0 = \sigma_{I,J}$, the permutation given by
$\sigma i_k = j_k$ for each $k$.
We have $\sum_{k=1}^d i_k = \sum_{k=1}^d k = \binom{d+1}{2}$.
Thus $[x_{I,J}]R = (-1)^{\sigma_{I,J}} \cdot d \binom{d}{\lambda}$.
In this case $\lambda = (1,\dots,1)$, and $\binom{d}{\lambda} = d!$.
Therefore $[x_{I,J}] R = (-1)^{\sigma_{I,J}} d \cdot d!$.

This proves
\begin{equation}
 [x_{I,J}]R = 
 \begin{cases}
     (-1)^{\sigma_{I,J}} d \cdot d! & \text{if } \Supp(I) = \Supp(J) = [d], \\
     0 & \text{otherwise}.
 \end{cases}
\end{equation}
This is $d \cdot d!$ times the coefficient $[x_{I,J}] \dett_d$, for all $I,J$.
Therefore $R = d \cdot d! \dett_d$, as claimed.
\end{proof}

\section{Linear independence}

In this section $\field$ is a field.

For $\sigma \in S_d$ and $j = 1,\dotsc,d$,
let $T_{\sigma,j} = (-1)^{\sigma}(-1)^{(d+1)j}(\omega^j x_{1,\sigma 1} + \dotsb + \omega^{dj} x_{d,\sigma d})^d$,
so Theorem~\ref{thrm: decomposition of det into powers of linear forms} is that
\[
  d \cdot d! \dett_d = \sum_{\sigma,j}T_{\sigma,j} .
\]
\begin{theorem}
The terms $T_{\sigma,j}$ appearing in \eqref{equation: explicit power sum decomposition} are linearly independent.
\end{theorem}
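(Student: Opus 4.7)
The plan is to suppose a linear relation $\sum_{\sigma, j} c_{\sigma, j} T_{\sigma, j} = 0$ with $c_{\sigma, j} \in \field$ and deduce that all $c_{\sigma, j}$ vanish. Since each $T_{\sigma,j}$ is a nonzero scalar multiple of $L_{\sigma,j}^d$, where $L_{\sigma,j} = \sum_{i=1}^d \omega^{ij} x_{i,\sigma i}$, it suffices to prove the $L_{\sigma,j}^d$ are linearly independent. The key structural observation is that $L_{\sigma,j}$ is a polynomial in only the $d$ variables $\{x_{i,\sigma i}\}_{i=1}^d$, and two distinct elements of $S_d$ must disagree in at least two positions, since agreement on $d-1$ positions forces agreement on all $d$. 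Consequently, any monomial whose variable support is contained in $\{x_{i,\sigma_0 i}\}_{i=1}^d$ and has size at least $d-1$ can appear in $L_{\sigma,j}^d$ only for $\sigma = \sigma_0$.

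Fix $\sigma_0 \in S_d$. I will extract the coefficients of the $d$ monomials
\[
  m_0 = \prod_{i=1}^{d} x_{i,\sigma_0 i}, \qquad m_\ell = x_{\ell,\sigma_0 \ell} \, \prod_{i=1}^{d-1} x_{i,\sigma_0 i} \quad (\ell = 1, \dotsc, d-1)
\]
from the hypothesized relation. Each has variable support of size at least $d-1$ on the $\sigma_0$-diagonal, so by the observation above the resulting $d$ equations involve only the unknowns $c_{\sigma_0,1}, \dotsc, c_{\sigma_0,d}$. A multinomial expansion computes $[m_\ell] L_{\sigma_0,j}^d$ as an explicit nonzero scalar times $\omega^{j(\binom{d}{2}+\ell)}$, where the scalar is $d!$ for $\ell=0$ (using $\omega^{jd}=1$) and $d!/2$ for $\ell \geq 1$. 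After rescaling $e_j := c_{\sigma_0,j}\, \omega^{j \binom{d}{2}}$ and clearing the common nonzero factors, the system becomes the $d \times d$ Vandermonde system $\sum_{j=1}^d e_j\, (\omega^j)^\ell = 0$ for $\ell = 0, 1, \dotsc, d-1$. Since $\omega$ is a primitive $d$-th root of unity, the values $\omega, \omega^2, \dotsc, \omega^d$ are distinct, so the Vandermonde matrix is invertible, forcing $c_{\sigma_0,j} = 0$ for every $j$. As $\sigma_0 \in S_d$ was arbitrary, this completes the proof.

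The main obstacle is choosing the $d$ monomials in the middle step: they must simultaneously be ``exclusive to $\sigma_0$'' (large enough support that no $\sigma \neq \sigma_0$ can contribute) and yield an invertible linear system on the $c_{\sigma_0,j}$. Using only the full-support monomial $m_0$ produces a single equation, which is not enough; the family $\{m_\ell\}_{\ell=0}^{d-1}$ works because each $m_\ell$ is still large-support and the multinomial coefficients combined with the values $\omega^{ij}$ arrange into precisely the Vandermonde structure inherited from the primitivity of $\omega$ that underlies the entire decomposition.
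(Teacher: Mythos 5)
Your proof is correct. It is close in spirit to the paper's argument --- both rest on the same two facts, namely that two distinct permutations disagree in at least two positions (so a degree-$d$ monomial supported on at least $d-1$ of the variables $x_{1,\sigma_0 1},\dotsc,x_{d,\sigma_0 d}$ can only come from the terms with $\sigma=\sigma_0$), and that the characters $j \mapsto \omega^{j\ell}$ are separated by root-of-unity orthogonality --- but the implementations differ. The paper constructs, for each pair $(\sigma,j)$, a single explicit dual form (the degree $d-1$ form $\sum_k \omega^{kj}\, x_{1,\sigma 1}\dotsm\widehat{x_{k,\sigma k}}\dotsm x_{d,\sigma d}$, padded to degree $d$ by a linear factor) that is nonzero on $T_{\sigma,j}$ and annihilates every other term; this is in effect the inverse discrete Fourier transform written down in closed form, and it needs only $d\neq 0$ in $\field$. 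You instead extract the coefficients of $d$ chosen monomials per $\sigma_0$ and invert the resulting Vandermonde system; this is more elementary (no apolarity formalism, no explicit separating functionals) at the cost of needing the multinomial coefficients $d!$ and $d!/2$ to be nonzero. That holds under the paper's standing hypothesis that $d!$ is invertible, but it is a genuinely stronger requirement than $d \neq 0$ (e.g.\ $d=5$ over a field of characteristic $3$ containing a primitive $5$th root of unity, where $d!/2=60=0$). Your computations check out: $[m_0]L_{\sigma_0,j}^d = d!\,\omega^{j\binom{d}{2}}$, $[m_\ell]L_{\sigma_0,j}^d = \tfrac{d!}{2}\,\omega^{j(\binom{d}{2}+\ell)}$ for $1 \leq \ell \leq d-1$, the support of each $m_\ell$ does pin down $\sigma=\sigma_0$, and the nodes $\omega,\dotsc,\omega^d$ are distinct since $\omega$ is primitive, so the Vandermonde matrix is invertible.
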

\begin{proof}
We show that there are linear functionals $\ell_{\sigma,j}$ on the space of homogeneous forms of degree $d$
such that $\ell_{\sigma,j}(T_{\psi,k})$ is nonzero if $\psi=\sigma$, $k=j$, and zero otherwise.
Recall that linear functionals correspond to ``dual'' homogeneous forms of degree $d$
in such a way that evaluating such a functional on a $d$th power $\ell^d$ of a linear form $\ell$
corresponds to evaluating the dual form at the point whose coordinates are the coefficients of $\ell$.
Let $P_{\sigma,j} \in \field^{d^2}$ be the point whose coordinates are the coefficients of
$\sum_{i=1}^d \omega^{ij} x_{i,\sigma i}$,
so the $x_{i,k}$ coefficient of $P_{\sigma,j}$ is $\omega^{ij}$ if $k = \sigma i$, $0$ otherwise.
Thus it is sufficient to show that, for each $\sigma,j$, there is a degree $d$ form
which is nonvanishing at the point $P_{\sigma,j}$
and vanishing at all of the points $P_{\psi,k}$ for $\psi \neq \sigma$ or $k \neq j$.

In fact we explicitly produce such forms of degree $d-1$.
The appropriate forms of degree $d$ can be obtained by multiplying by appropriate linear forms
(to increment the degree); any linear form nonvanishing at $P_{\sigma,j}$ will do.

Given $\sigma \in S_d$, for each $k$ let
$e_{\sigma,k} = x_{1,\sigma 1} \dotsm \widehat{x_{k,\sigma k}} \dotsm x_{d,\sigma d}$.
Now given $j$, let $L_{\sigma,j}$ be the degree $d-1$ form
\[
  L_{\sigma,j} = \sum_{k=1}^d \omega^{kj} e_{\sigma,k} .
\]
We evaluate $L_{\sigma,j}$ first at the point $P_{\sigma,j}$.
Observe
\[
  e_{\sigma,k}(P_{\sigma,j}) = \omega^{1j} \dotsm \widehat{\omega^{kj}} \dotsm \omega^{dj} = \omega^{\binom{d+1}{2} j - kj} .
\]
Thus
\[
  L_{\sigma,j}(P_{\sigma,j}) = \sum_{k=1}^d \omega^{\binom{d+1}{2} j} = (-1)^{(d+1)j} d ,
\]
which is indeed nonzero.

For $m \neq j$,
\[
  L_{\sigma,j}(P_{\sigma,m}) = \sum_{k=1}^d \omega^{kj} \omega^{\binom{d+1}{2} m - km}
    = (-1)^{(d+1)m} \sum_{k=1}^d \omega^{(j-m)k}.
\]
This is zero when $j \not\equiv m \pmod{d}$, which is equivalent to $j \neq m$ since $1 \leq j,m \leq d$.

Finally for $\psi \neq \sigma$, for every $k$, there is some $j \neq k$ such that $\sigma j \neq \psi j$.
(That is, the permutations $\psi$ and $\sigma$ have different values in at least two positions.)
So every $e_{\sigma,k}(P_{\psi,m})$ is a product involving a factor given by the $x_{j,\sigma j}$ coordinate
of $P_{\psi,m}$, but that coordinate is zero.
\end{proof}

\section{Symmetries}

In this section $\field$ is a field.

Let $V$ be the vector space over $\field$ spanned by the variables $x_{i,j}$, so $\dett_d \in \Sym^d(V)$.
Linear automorphisms of $V$ induce automorphisms of $\Sym^d(V)$.
Following \cite{Conner_2019}, it is natural to ask which of these transformations fix the decomposition \eqref{equation: explicit power sum decomposition},
that is, which linear automorphisms of $V$ leave the set $\{T_{\sigma,j}\}$ invariant.
In other words, they should leave invariant the set of linear forms whose $d$th powers occur in \eqref{equation: explicit power sum decomposition}.
However, note that said linear forms are only unique up to a $d$th root of unity, and the $d$th powers occur with coefficients of $\pm 1$, which must be preserved.

It is helpful to describe these linear forms in terms of their matrix of coefficients.
We write $E_{i,j}$ for the $d \times d$ matrix with a $1$ entry in the $(i,j)$ position
and all other entries $0$.
We write $P_\sigma$ for the permutation matrix $P_\sigma = \sum_{i=1}^d E_{i,\sigma i}$.
Note that for a $d \times 1$ column vector $(v_i)$, we have
\[
  P_\sigma \begin{pmatrix} v_1 \\ v_2 \\ \vdots \\ v_d \end{pmatrix} = \begin{pmatrix} v_{\sigma 1} \\ v_{\sigma 2} \\ \vdots \\ v_{\sigma d} \end{pmatrix}.
\]
Note also that for $\sigma,\psi \in S_d$, $P_{\sigma} P_{\psi} = P_{\sigma \psi}$.
The permutation matrices are orthogonal: $P_{\sigma}^{-1} = P_{\sigma}^t = P_{\sigma^{-1}}$.
We have $(-1)^\sigma = \det P_\sigma$.
Let $ D = \diag\{\omega,\omega^2,\dots,\omega^d\} $.
Now the linear form that appears in the term $T_{\sigma,j}$ has its coefficients given by the matrix $D^{j}P_\sigma$.
In conclusion, we are looking for linear automorphisms of $V$ which preserve the set $ \{D^jP_\sigma\} $ up to factors of $d$th roots of unity, i.e., linear automorphisms which leave the set $\{\omega^kD^{j}P_\sigma\}$ invariant.
Beyond this, some terms occur in \eqref{equation: explicit power sum decomposition} with coefficient $1$ and others with coefficient $-1$; we want our linear automorphisms to respect those coefficients.

\begin{definition}
    Let $\tilde{G}$ be the group of linear automorphisms of $V$ which leave the set of matrices $M = \{\omega^k D^d P_\sigma\}$ invariant and let $G$ be the subgroup of $\tilde{G}$ which preserves the determinant.
    We call the elements of $G$ the symmetries of the decomposition \eqref{equation: explicit power sum decomposition}.
\end{definition}
We're looking for the group $G$, which is a finite group.
We are not able to give the full group,
but we describe a subgroup of order $d^3 \varphi(d) \cdot d!/2$.

A theorem of Frobenius \cite{Frobenius} (see also \cite{MO-Frobenius}) states that all linear automorphisms $L$ of $V$ that fix $\dett_d$ (in the sense that $\det(L(X)) = \det(X)$ for all $X$) are of the form $X \mapsto AXB$ or $X \mapsto AX^t B$, 
where $X = (x_{i,j})$ and $A,B$ are $d \times d$ matrices with $\det(AB)=1$.
The transformations involving transposition are difficult to analyze,
so we consider only the transformations $AXB$.
Our question, then, is which $ A $ and $ B $ satisfy $ A(D^mP_{\rho})B \in M$.

The set $M$ is invariant under multiplication by powers of $ \omega $, left multiplication by $D$, and right multiplication by arbitrary permutation matrices.
Half of the permutations also preserve the determinant (the other half reverse the sign).
Note that $\det(D) = (-1)^{d+1}$, so depending on the parity of $d$, multiplication by $D$ may also preserve or reverse the sign of the determinant.

As for left multiplication by permutation matrices, the decomposition is preserved by certain permutations corresponding to what one might call \emph{affine linear transformations} of $\bbZ/d\bbZ$. 
Let $ \Aff_d $ be the set of permutations $\sigma$ in $S_d$
for which there exist $a,b\in[d]$ such that $\sigma i = ai+b \bmod{d}$ for all $i\in [d]$.
This is a subgroup of $S_d$ with order $d\varphi(d)$, where $\varphi$ is Euler's totient function.
In fact $\Aff_d \cong \bbZ/d\bbZ \rtimes (\bbZ/d\bbZ)^*$.
The permutation $\sigma i = i+b \bmod{d}$ has sign $(-1)^{b(d+1)}$; for $a \in (\bbZ/d\bbZ)^*$,
the sign of the permutation $\sigma i = ai \bmod{d}$ is the Jacobi symbol $\left(\frac{a}{d}\right)$ if $d$ is odd, or $(-1)^{(\frac{d}{2}+1)(\frac{a-1}{2})}$ if $d$ is even \cite{MSE-Zolotarev}.

\begin{lemma}
    Let $ \pi \in S_d $.
	Then $ P_\pi D \in M $ if and only if $ \pi \in \Aff_d $.
\end{lemma}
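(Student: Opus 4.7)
The plan is to prove the lemma by direct matrix computation: I will compute the entries of $P_\pi D$ and of a general element $\omega^k D^j P_\sigma \in M$, and match them term by term.

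First, using $(P_\pi)_{i,l} = 1$ iff $l = \pi i$ and $D_{l,l} = \omega^l$, I would observe that $P_\pi D$ has entry $\omega^{\pi i}$ in position $(i, \pi i)$ and zero elsewhere. Similarly, $\omega^k D^j P_\sigma$ has entry $\omega^{k + ij}$ in position $(i, \sigma i)$ and zero elsewhere. Matching supports of the two matrices forces $\sigma = \pi$, and then matching nonzero entries forces the congruence
\[
  \pi i \equiv ij + k \pmod{d} \quad \text{for all } i \in [d].
\]

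For the forward direction, if $P_\pi D \in M$, then such a $j$ and $k$ exist. Since $\pi$ is a bijection of $[d]$, the affine map $i \mapsto ji + k \pmod d$ must also be a bijection, hence $\gcd(j,d) = 1$, i.e., $j \in (\bbZ/d\bbZ)^*$. Taking $a = j$ and $b = k$ modulo $d$ exhibits $\pi \in \Aff_d$. For the reverse direction, if $\pi \in \Aff_d$ with $\pi i \equiv ai + b \pmod{d}$ for some $a \in (\bbZ/d\bbZ)^*$ and $b \in \bbZ/d\bbZ$, I choose representatives $j \in \{1,\dots,d\}$ with $j \equiv a \pmod d$ and $k \in \{0,\dots,d-1\}$ with $k \equiv b \pmod d$, and then $P_\pi D = \omega^k D^j P_\pi$, which belongs to $M$.

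There is really no serious obstacle here; the content is that the diagonal twist $D$ shifts the exponents linearly in the row index $i$, and an arbitrary constant shift is absorbed by the scalar $\omega^k$, so the set of $\pi$ reachable is precisely the affine group $\Aff_d$. The only mild care needed is to keep track of the indexing conventions (that $j$ and $k$ are defined modulo $d$, and that $D^d = I$), so the existence of $j$ and $k$ as integers in the chosen ranges follows automatically from their existence modulo $d$.
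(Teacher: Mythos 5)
Your proof is correct and follows essentially the same route as the paper's: compute the single nonzero entry in each row of $P_\pi D$ and of $\omega^k D^j P_\sigma$, match supports to force $\sigma = \pi$, and match exponents to get the affine congruence $\pi i \equiv ij + k \pmod{d}$. The paper phrases the entry comparison as the conjugation identity $P_\pi D P_{\pi^{-1}} = \omega^k D^j$, but that is the same computation, and your extra remark that bijectivity forces $\gcd(j,d)=1$ is a correct (and welcome) detail.
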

\begin{proof}
	Suppose that $ P_\sigma D = \omega^bD^aP_\psi $.
	Then, by comparing the locations of nonzero entries on both sides we have that $ \psi = \sigma $.
	Hence $ P_\sigma D P_{\sigma^{-1}} = \omega^bD^a$.
	The two sides are equivalent to $ \diag\{\omega^{\sigma 1},\omega^{\sigma 2},\dots, \omega^{\sigma d}\} $ and $ \diag\{\omega^{a+b},\omega^{2a+b},\dots,\omega^{da+b}\} $.
	Hence $ \omega \in \Aff_d $.
	
	Conversely, if $ \sigma\in \Aff_d $ so that $ \sigma i = ai+b \pmod d $ then $ P_\sigma D = \omega^bD^aP_{\sigma}$, and hence $ P_\sigma D \in M $.
\end{proof}

This proves that the map $X\mapsto \omega^m D^n P_\pi X P_\sigma$ is an element of $\tilde{G}$.
Let $\tilde{H} \subseteq \tilde{G}$ be the subgroup of elements of the form $X \mapsto \omega^m D^n P_\pi X P_\sigma$, $\pi \in \Aff_d$, $\sigma \in S_d$.
The proof of the above lemma shows that this is a subgroup, and in fact that it is a semidirect product:
\[
  \tilde{H} \cong ((\bbZ/d\bbZ \times \bbZ/d\bbZ) \rtimes \Aff_d) \times S_d.
\]
Here the $\bbZ/d\bbZ \times \bbZ/d\bbZ$ factor corresponds to multiplication by $\omega$ and $D$.

An element of $\tilde{H}$ given by $X \mapsto \omega^m D^n P_\pi X P_\sigma$ preserves the determinant,
meaning that $\det(\omega^m D^n P_\pi X P_\sigma) = \det(X)$, if and only if $\det(\omega^m D^n P_\pi P_\sigma) = 1$.
For half the elements of $H$ this holds, and for the other half, this determinant is $-1$.
Let $H \subset \tilde{H}$ be the index $2$ subgroup that preserves the determinant.
That is, $H = \tilde{H} \cap G$.

\begin{theorem}\label{thrm: symmetries of decomposition}
 Let $m,n \in [d]$, $\pi \in \Aff_d$, and $\sigma \in S_d$
 such that $\det(D)^n (-1)^\pi (-1)^\sigma = 1$.
 Then $L: X \mapsto \omega^m D^n P_\pi X P_\sigma$
 is a symmetry of the decomposition and is an element of $G$.
 \hfill $\qed$ 
\end{theorem}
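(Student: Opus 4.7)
The plan is to verify the two defining conditions of $G$: first that $L$ leaves the set $M = \{\omega^k D^j P_\tau\}$ invariant, so that $L \in \tilde{G}$; and second that $L$ preserves $\det_d$, so that $L$ lies in $G = \tilde{G} \cap \{\text{determinant stabilizer}\}$.

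For invariance of $M$, I would take an arbitrary element $\omega^k D^j P_\tau \in M$ and push it through $L$:
\[
L(\omega^k D^j P_\tau) = \omega^{m+k}\, D^n (P_\pi D^j) P_\tau P_\sigma.
\]
The key ingredient is the commutation identity supplied by the preceding lemma: if $\pi i \equiv ai + b \pmod{d}$, then $P_\pi D P_\pi^{-1} = \omega^b D^a$. Conjugating $D^j$ by $P_\pi$ therefore gives $P_\pi D^j = (\omega^b D^a)^j P_\pi = \omega^{bj} D^{aj} P_\pi$, since $\omega$ is a central scalar. Substituting and using $P_\pi P_\tau P_\sigma = P_{\pi\tau\sigma}$ yields
\[
L(\omega^k D^j P_\tau) = \omega^{m+k+bj}\, D^{n+aj}\, P_{\pi\tau\sigma},
\]
which visibly lies in $M$. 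Since $L$ is invertible and sends the finite set $M$ into itself, it must permute $M$, so $L \in \tilde{G}$.

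For the determinant condition, multiplicativity of $\det$ gives
\[
\det(L(X)) = \omega^{md}\,\det(D)^n\,\det(P_\pi)\,\det(P_\sigma)\,\det(X).
\]
Since $\omega^d = 1$, $\det(P_\pi) = (-1)^\pi$, and $\det(P_\sigma) = (-1)^\sigma$, the prefactor simplifies to $\det(D)^n (-1)^\pi (-1)^\sigma$, which equals $+1$ by hypothesis. Hence $\det(L(X)) = \det(X)$, and combining with the first step places $L$ in $G$.

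I do not anticipate any real obstacle: the proof reduces to the commutation identity $P_\pi D = \omega^b D^a P_\pi$ that has already been established, together with elementary multiplicativity of the determinant. The only bookkeeping point worth stating carefully is the iterated relation $P_\pi D^j = \omega^{bj} D^{aj} P_\pi$, and the observation that $\omega^{md} = 1$ is what allows the scalar factor $\omega^m$ to contribute nothing to the determinant check.
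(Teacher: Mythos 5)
Your proof is correct and follows essentially the same route the paper intends: the theorem is stated with an immediate $\qed$ because it is meant to follow from the preceding lemma ($P_\pi D = \omega^b D^a P_\pi$ for $\pi \in \Aff_d$) together with the observed invariances of $M$ and the multiplicativity of the determinant, which is exactly what you spell out. Your explicit computation $L(\omega^k D^j P_\tau) = \omega^{m+k+bj} D^{n+aj} P_{\pi\tau\sigma}$ and the observation $\omega^{md}=1$ are the right bookkeeping details.
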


This gives a subgroup of $G$ with order
$d^3\varphi(d)d!/2$, as claimed.
Table~\ref{table: symmetries} shows the number of symmetries
given by Theorem~\ref{thrm: symmetries of decomposition}.
However, this is not the full group of symmetries,
as in the $2 \times 2$ and $3 \times 3$ cases one can check that
transposition $X \mapsto X^t$ is another symmetry of the decomposition.
But for $d > 3$ it is not: for example $D P_{(1 \, 2)} \in M$
but $(D P_{(1 \, 2)})^t = P_{(1 \, 2)} D \notin M$.


\begin{table}[ht]
	\[
	\begin{array}{c rrrrr}
	d & 2 & 3 & 4 & 5 & 6 \\
	\midrule
	d^3\varphi(d)d!/2 & 8 & 162 & 1536 & 37500 & 15552 
	\end{array}
	\]
	\caption{Number of symmetries in the subgroup $H$.}
	\label{table: symmetries}
\end{table}

It would be interesting to fully characterize the whole
group of symmetries $G$, where $A,B$ are not necessarily given by
powers of $D$ or permutation matrices.
Additionally, 
it would be interesting to characterize the symmetries of the decomposition
of the form $X \mapsto AX^t B$.

\begin{remark}
The decomposition \eqref{equation: explicit power sum decomposition} is certainly not unique. For example, if $A,B$ are any two $d \times d$ matrices with $\det(AB)=1$ then one obtains a decomposition of $\det(X) = \det(AXB)$ as a sum of $d$th powers of linear forms with coefficient matrices given by $A D^j P_{\sigma} B$. Transposition yields more decompositions.
\end{remark}

\section{Defining equations}

We describe defining equations of the set of matrices $M$, up to scalar multiple.
That is, we give set-theoretic defining equations for the set of projective points $\{[D^j P_\sigma]\}$.
The equations we describe are quadrics of two types:
first, monomials, products of two distinct matrix entries from the same row or column,
and second, certain quadrics in row-sums.

In this theorem, $x_{i,j}$ are coordinates on the space of $d \times d$ matrices.
\begin{theorem}
For each $i$, let $\rho_i = x_{i,1} + \dotsb + x_{i,d}$, the $i$th row-sum.
Let $I$ be the ideal generated by the quadrics $x_{i,j_1} x_{i,j_2}$ for all $i$ and all $j_1 \neq j_2$, $x_{i_1,j} x_{i_2,j}$ for all $j$ and all $i_1 \neq i_2$, and $\rho_i^2 - \rho_{i-1} \rho_{i+1}$ for all $i$, with indices considered modulo $d$.
The common zero locus of these equations is exactly the set of projective points $\{[D^j P_\sigma]\}$.
\end{theorem}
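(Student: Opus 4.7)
The plan is to prove the set-theoretic equality by both containments. For the forward direction, $D^j P_\sigma$ has exactly one nonzero entry, namely $\omega^{ij}$ at position $(i,\sigma i)$, so every monomial quadric $x_{i,j_1} x_{i,j_2}$ with $j_1 \neq j_2$ and $x_{i_1,j} x_{i_2,j}$ with $i_1 \neq i_2$ vanishes on $D^j P_\sigma$; moreover the $i$th row-sum at this matrix is $\rho_i = \omega^{ij}$, which satisfies $\rho_i^2 = \omega^{2ij} = \omega^{(i-1)j}\omega^{(i+1)j} = \rho_{i-1}\rho_{i+1}$.

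For the reverse direction, let $X$ represent a nonzero point of the common zero locus. The monomial quadrics force each row and each column of $X$ to contain at most one nonzero entry, so each row-sum $\rho_i$ equals the unique (possibly zero) nonzero entry of row $i$. If some $\rho_i = 0$, then $\rho_{i-1}^2 = \rho_{i-2}\rho_i = 0$ forces $\rho_{i-1} = 0$, and iterating around the cycle gives $\rho_k = 0$ for every $k$, so $X = 0$, a contradiction. Hence every $\rho_i$ is nonzero; each row then contains exactly one nonzero entry, and by pigeonhole each column does as well, so the support of $X$ is the graph of a permutation $\sigma \in S_d$, with the entry in row $i$ located at column $\sigma i$ and equal to $\rho_i$.

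The final and most substantive step is to extract the exponent $j$ from the cyclic geometric progression. With every $\rho_i$ nonzero, the relations $\rho_i^2 = \rho_{i-1}\rho_{i+1}$ rearrange to $\rho_{i+1}/\rho_i = \rho_i/\rho_{i-1}$, so the ratio $r := \rho_{i+1}/\rho_i$ is independent of $i$; multiplying these ratios around the cycle yields $r^d = \prod_{i=1}^d \rho_{i+1}/\rho_i = 1$, so $r$ is a $d$th root of unity. Because $\omega$ is primitive, $r = \omega^j$ for a unique $j \in [d]$ (with the convention $\omega^d = 1$). Setting $\lambda = \rho_1 \omega^{-j}$, we obtain $\rho_i = \rho_1 r^{i-1} = \lambda \omega^{ij}$, so $X = \lambda D^j P_\sigma$ and $[X] = [D^j P_\sigma]$, completing the proof.
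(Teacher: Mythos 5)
Your proof is correct and follows essentially the same route as the paper: the monomial quadrics reduce to generalized permutation matrices $\Delta P_\sigma$, and the cyclic relations $\rho_i^2 = \rho_{i-1}\rho_{i+1}$ then force the diagonal to be a scalar multiple of $D^j$. Where the paper invokes the rational normal curve equations and glosses over the degenerate case, you give a direct constant-ratio argument (with $r^d=1$ from the telescoping product) and explicitly rule out a vanishing row-sum via the cyclic propagation $\rho_{i-1}^2 = \rho_{i-2}\rho_i = 0$, which is a welcome bit of extra care but not a different method.
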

\begin{proof}
The monomial generators cut out the set of matrices with at most one nonzero entry in each row and column.
Such matrices have the form $\Delta P_\sigma$ for some diagonal $\Delta$ and permutation $\sigma$, unique as long as $\Delta$ is nonsingular.
On this set, the values of $\rho$ are the entries of $\Delta$.
The equations $\rho_i^2 - \rho_{i-1}\rho_{i+1}$ for $2 \leq i \leq d-1$
ensure that the entries of $\Delta$ form a geometric series $(s^{d-1},s^{d-2}t,\dotsc,t^{d-1})$ for some $s,t$
(they are the familiar equations of the rational normal curve in $\bbP^d$ parametrized by $[s^{d-1} : s^{d-2} t : \dotsb : t^{d-1}]$).
The other equations at $i=1,d$ ensure that $s^d=t^d$, i.e., $t/s$ is a $d$th root of unity.
Then, up to a scalar multiple, the entries of $\Delta$ are $(w^j,w^{2j},\dotsc,w^{dj})$ for some $j$, i.e., $\Delta = D^j$.
\end{proof}
These are certainly not the generators of the full ideal of this set of points.
For $d=3$, additional generators are given as follows: $x_{i,j}^2 - P_{i;j}$, where $P_{i;j}$ is the permanent of the $2 \times 2$ submatrix complementary to the entry $x_{i,j}$.
(Our equations $\rho_i^2 - \rho_{i-1} \rho_{i+1}$ are sums of those generators, up to some monomials.)
For $d=4$, some additional generators are given by $x_{i,j_1}^2 + x_{i,j_2}^2 - P_{i,i+2;j_1,j_2}$, where $P_{i,i+2;j_1,j_2}$ is the permanent of the $2 \times 2$ submatrix complementary to rows $i,i+2$ (i.e., having rows $i-1$ and $i+1$) and to columns $j_1,j_2$.
More generators are given by $P_{i_1,i_2;j_1,j_2} - P_{i_3,i_4;j_3,j_4}$ where $\{i_1,\dotsc,i_4\}=\{j_1,\dotsc,j_4\}=\{1,\dotsc,4\}$ and $i_1+i_2 \equiv i_3+i_4 \pmod{4}$.

It would be interesting to describe these ideals in general.

\bigskip

\section*{Acknowledgements}

This work was supported by a grant from the Simons Foundation (\#354574, Zach Teitler).
We thank Gianni Krakoff and Kayla Krakoff for contributions in early conversations about this project,
the authors of \cite{Conner:2019aa} for sharing with us their work,
Jaros{\l}aw Buczy{\'n}ski and Fulvio Gesmundo for a number of very helpful suggestions,
and J.M.\ Landsberg for sharing with us Gurvits's result,
as well as encouragement.

\bigskip

\bibliographystyle{amsplain}       
\renewcommand{\MR}[1]{}
\providecommand{\bysame}{\leavevmode\hbox to3em{\hrulefill}\thinspace}
\providecommand{\MR}{\relax\ifhmode\unskip\space\fi MR }
\providecommand{\MRhref}[2]{%
  \href{http://www.ams.org/mathscinet-getitem?mr=#1}{#2}
}
\providecommand{\href}[2]{#2}

\bigskip

\end{document}